\documentclass{amsart}

\usepackage[utf8]{inputenc}
\usepackage{amsmath,amsthm,amssymb}
\usepackage{graphicx,xcolor}
\usepackage{url}
\usepackage[algo2e, ruled, linesnumbered]{algorithm2e}
\usepackage{booktabs}
\usepackage{multirow}

\newtheorem{theorem}{Theorem}
\newtheorem{lemma}[theorem]{Lemma}

\newtheorem{definition}[theorem]{Definition}

\subjclass[2020]{Primary 52B10, 90C22, Secondary 37C20, 52A38} 

\date{\today}

\newcommand{\R}{\mathbb{R}}
\newcommand{\T}{\mathrm{T}}

\DeclareMathOperator*{\maximize}{maximize}
\DeclareMathOperator*{\st}{subject\;to}
\DeclareMathOperator{\bd}{bd}

\newcommand{\vA}{\mathbf{A}}
\newcommand{\vb}{\mathbf{b}}
\newcommand{\vc}{\mathbf{c}}
\newcommand{\vI}{\mathbf{I}}

\newcommand{\vQ}{\mathbf{Q}}
\newcommand{\vr}{\mathbf{r}}

\newcommand{\vx}{\mathbf{x}}
\newcommand{\vy}{\mathbf{y}}
\newcommand{\vzero}{\mathbf{0}}

\newcommand{\deletethis}[1]{{}}


\numberwithin{equation}{section}

\begin{document}

\begin{center}
\large{\bf{The smallest mono-unstable, homogeneous convex polyhedron has at least 7 vertices}}  \\[5mm]

\normalsize
S{\'a}ndor Boz{\'o}ki \\[2mm]
corresponding author \\ 
{HUN-REN Institute for Computer Science and Control (SZTAKI), Kende street 13-17, Budapest, Hungary, 1111; \\
Corvinus University of Budapest} \\ 
\texttt{bozoki.sandor@sztaki.hun-ren.hu} \\[5mm]

G{\'a}bor Domokos \\[2mm]
{Department of Morphology and Geometric Modeling and MTA-BME Morphodynamics Research Group, Budapest University of Technology and Economics, Budapest, Hungary} \\
 M\H uegyetem rakpart 1-3., Budapest, Hungary, 1111 \\
\texttt{domokos@iit.bme.hu}  \\[5mm]

D{\'a}vid Papp \\[2mm]
{Department of Mathematics, North Carolina State University, Raleigh, NC, USA. \\ \texttt{https://orcid.org/0000-0003-4498-6417}} \\
Box 8205, NC State University, Raleigh, NC 27695-8205, USA  \\
\texttt{dpapp@ncsu.edu} \\[5mm]

Krisztina Reg{\H o}s \\[2mm]
{Department of Morphology and Geometric Modeling and MTA-BME Morphodynamics Research Group, Budapest University of Technology and Economics, Budapest, Hungary} \\
 M\H uegyetem rakpart 1-3., Budapest, Hungary, 1111  \\
\texttt{regos.kriszti@gmail.com} \\[9mm]
\end{center}

{DP: Research was done while visiting the Corvinus Institute of Advanced Studies, Corvinus University, Budapest, Hungary. This material is based upon work supported by the National Science Foundation under Grant No.~DMS-1847865. This material is based upon work supported by the Air Force Office of Scientific Research under award number FA9550-23-1-0370. Any opinions, findings and conclusions or recommendations expressed in this material are those of the author(s) and do not necessarily reflect the views of the U.S. Department
of Defense. GD: Support of the NKFIH Hungarian Research Fund grant 134199 and of grant BME FIKP-VÍZ by EMMI is kindly acknowledged. SB: The research has been supported in part by the TKP2021-NKTA-01 NRDIO grant. KR: This research has been supported by the program UNKP-23-3 funded by ITM and NKFI. The research was also supported by the Doctoral Excellence Fellowship Programme (DCEP) funded by ITM and NKFI and the Budapest University of Technology and Economics. The gift representing the Albrecht Science Fellowship is gratefully appreciated. }

\newpage

\section*{Abstract}
We prove that every homogeneous convex polyhedron with only one unstable equilibrium (known as a mono-unstable convex polyhedron) has at least $7$ vertices. Although it has been long known that no mono-unstable tetrahedra exist, and mono-unstable polyhedra with as few as $18$ vertices and faces have been constructed, this is the first nontrivial lower bound on the number of vertices for a mono-unstable polyhedron.

There are two main ingredients in the proof. We first establish two types of relationships, both expressible as (non-convex) quadratic inequalities, that the coordinates of the vertices of a mono-unstable convex polyhedron must satisfy, taking into account the combinatorial structure of the polyhedron. Then we use numerical semidefinite optimization algorithms to compute easily and independently verifiable, rigorous certificates that the resulting systems of quadratic inequalities (5943 in total) are indeed inconsistent in each case.

\section{Introduction}

The stability of convex bodies, specifically the number of stable and unstable equilibria that a convex body (and, in particular, a convex polyhedron) may have, is a fundamental area of study in geometry. It is
intuitively clear that every convex body has at least $S=1$ stable equilibrium (a point on its boundary on which it can stand on a horizontal surface without toppling over) and at least $U=1$ unstable equilibrium (a point on its boundary on which it can theoretically be balanced like a pencil on its tip)---the points of the boundary closest and farthest from the center of mass always qualify as a stable and an unstable equilibrium, respectively. For general, $C^1$-smooth convex bodies Arnold conjectured \cite{VarkonyiDomokos} that  that $S+U=2$ can be realized as a homogeneous object  and a physical example has also been constructed \cite{VarkonyiDomokos}. In the case of polyhedra, one may wonder what is the minimally necessary number of faces and vertices to achieve either $S=1$ (a \emph{mono-stable} polyhedron), $U=1$ (a \emph{mono-unstable} polyhedron) or both. The first homogeneous convex polyhedron with $S=1$ was constructed by Conway and Guy \cite{ConwayGuy1969} in 1969 with $F=19$ faces and $V=34$ vertices; this was followed by other constructions with slightly fewer faces and vertices \cite{Bezdek, Reshetov}; similarly, homogeneous mono-unstable polyhedra have been constructed in \cite{balancing}. All these constructions provide \emph{upper} bounds for the necessary numbers of faces and vertices. On the other hand, surprisingly little is known about the lower bound: Conway proved, using an elementary argument that homogeneous tetrahedra have $S>1$ \cite{Dawson2} and using polar duality, $U>1$ was also proven for tetrahedra \cite{balancing}. 
The main contribution in this paper is the first nontrivial lower bound on the number of vertices of a mono-unstable polyhedron.
\begin{theorem}
\label{thm:main}
Every homogeneous mono-unstable polyhedron has at least 7 vertices.
\end{theorem}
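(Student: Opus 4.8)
The plan is to reduce Theorem~\ref{thm:main} to the infeasibility of finitely many systems of quadratic inequalities---one system for each combinatorial configuration that a hypothetical small mono-unstable polyhedron could realize---and then to certify each infeasibility by semidefinite duality. Since every convex polyhedron has at least four vertices and it is already known that tetrahedra satisfy $U>1$~\cite{balancing}, it suffices to rule out mono-unstable polyhedra with exactly $5$ or $6$ vertices. I would begin from the standard characterization of equilibria of a homogeneous convex polyhedron in terms of its centroid $\mathbf{c}$: a vertex $v$ is an unstable equilibrium exactly when $v-\mathbf{c}$ lies in the interior of the normal cone of the polyhedron at $v$, equivalently when $\langle v-\mathbf{c},\, w-v\rangle \le 0$ for every neighbor $w$ of $v$. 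Crucially, each such condition is \emph{quadratic} in the coordinates of the vertices and of $\mathbf{c}$, so if the centroid is carried as an auxiliary vector of unknowns, subject to the further relations pinning it to the true barycenter, the entire equilibrium structure becomes a system of quadratic constraints.

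Next I would enumerate, for $V\in\{5,6\}$, every combinatorial type of $3$-polytope (there are $2$ types on $5$ vertices and $7$ on $6$ vertices) together with its vertex--edge incidences. For each type I designate---up to the combinatorial symmetry group, to cut down the count---which vertex is the sole unstable equilibrium. This yields the first type of relationship: the quadratic inequalities $\langle v_k-\mathbf{c},\, w-v_k\rangle\le 0$ over the edges at the distinguished vertex $v_k$. The second type encodes that \emph{no other} vertex is unstable. Because the negation of ``$v_i$ is unstable'' is the disjunction ``$\langle v_i-\mathbf{c},\,w-v_i\rangle>0$ for some neighbor $w$'', I would split into cases by selecting, for each non-distinguished vertex, one neighbor that witnesses its non-unstability; each selection contributes one strict quadratic inequality. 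Together with the quadratic relations constraining $\mathbf{c}$, each combination of combinatorial type, distinguished vertex, and witness choices produces one system of quadratic inequalities; summing over all combinations gives the $5943$ systems. Any genuine mono-unstable polyhedron with at most $6$ vertices would provide a real solution to at least one of them.

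It therefore remains to prove that every one of these quadratic systems is infeasible. For this I would invoke the first level of the moment--sum-of-squares (Lasserre / S-procedure) relaxation: infeasibility of a system $\{g_j\ge 0\}$ of quadratics is certified by nonnegative scalars $\lambda_j$ and a sum-of-squares remainder $\sigma$ satisfying the polynomial identity $\sigma+\sum_j\lambda_j g_j = -1$, and the search for such a certificate is a semidefinite feasibility problem. I would solve these semidefinite programs numerically, then round the dual solutions to exact rational certificates and verify the resulting algebraic identities in exact arithmetic, so that the final argument is independent of any floating-point computation and can be checked independently---exactly the form of rigorous certificate the paper produces.

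The main obstacle I anticipate is the modeling step rather than the certification. The centroid is a high-degree rational function of the vertex coordinates, so expressing enough information about $\mathbf{c}$ to force infeasibility while staying within the quadratic (hence semidefinite-certifiable) regime is delicate: if $\mathbf{c}$ is relaxed too much---say, merely required to lie in the interior of the polytope---the resulting relaxation may well be feasible and prove nothing, whereas overly faithful centroid relations raise the degree beyond what the first relaxation level can handle. Finding quadratic necessary conditions that are simultaneously correct and strong enough to be jointly inconsistent in every combinatorial case, and confirming that the numerically computed certificates round to exact ones across all $5943$ systems, is where the real work lies.
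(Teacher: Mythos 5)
Your overall architecture---enumerate combinatorial types and, for each non-distinguished vertex, a neighbor witnessing its non-instability; reduce everything to quadratic systems; certify infeasibility by semidefinite duality with independently checkable rational certificates---is the same as the paper's, and your ``second type of relationship'' is exactly the paper's shadowing inequalities. But there is a genuine gap at precisely the point you defer to ``where the real work lies'': the centroid. Your modeling claim is incorrect as stated: carrying $\mathbf{c}$ as an auxiliary vector of unknowns ``subject to the further relations pinning it to the true barycenter'' does \emph{not} keep the system quadratic, because those pinning relations are degree-four polynomial equations in the vertex coordinates (the barycenter is a volume-weighted average and tetrahedron volumes are cubic), regardless of whether $\mathbf{c}$ is a separate variable. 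This is exactly why the paper declares its conceptual Algorithm~\ref{alg:basic} impractical and why degree-one SDP certificates cannot be applied to the faithful formulation. Since you provide no concrete quadratic surrogate for the centroid condition---and, as you note, dropping it or weakening it to ``$\mathbf{c}$ interior'' risks producing feasible systems that prove nothing---your proposal has no executable path from the enumeration to the certification step.

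The paper's key idea, which is the missing ingredient, is the tetrahedral decomposition together with a pigeonhole argument. Normalize the centroid to the origin and the candidate unstable vertex $v$ to $(1,0,0)$, and decompose $P$ into the tetrahedra spanned by $v$ and the faces not containing $v$. The volume-weighted average of the tetrahedra's centroids has first coordinate $0$ and all volumes are positive, so at least one tetrahedron $T$ must satisfy the purely \emph{linear} inequality $\sum_{k\in V(T)} r_{k,1}\le 0$, i.e.\ \eqref{eq:Sanyi}. Case-splitting over which tetrahedron plays this role keeps every system quadratic and is where the count $5943 = 75(3+2)+384\cdot 4+336(3+4+5)$ actually comes from: your enumeration (combinatorial type $\times$ distinguished vertex $\times$ witness choices), lacking the tetrahedron factor, would not reproduce it. Two further, smaller discrepancies: the paper first reduces to \emph{simplicial} polytopes by a perturbation argument (so only $1+2$ maximal planar graphs are needed, not your $2+7$ combinatorial types), and its final systems \eqref{eq:system-alg2} impose no condition asserting that $v$ \emph{is} unstable---only non-strict shadowing inequalities for the other vertices. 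Your witnesses should likewise be non-strict: Lemma~\ref{lem:not-unstable-in-the-shadow} only guarantees a neighbor in the \emph{closed} half-space, so refuting systems with strict witness inequalities would fail to exclude degenerate configurations in which a witness neighbor lies exactly on the orthoplane.
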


The approach is an adaptation of the authors' earlier work on the stability of $0$-skeleta \cite{Bozoki, PappEJOR} to this significantly more difficult case of \emph{homogeneous} polyhedra. We first produce various systems of polynomial inequalities with the property that if none of them has a solution, then homogeneous mono-unstable polyhedra with $V=5$ or $6$ vertices do not exist. Then we employ semidefinite optimization to show that the polynomial systems are indeed inconsistent (or ``infeasible'', in optimization terminology). The proof is thus computer-assisted and relies on inexact numerical computation with in principle unverifiable software, but the automated portion of the work produces \emph{easily and independently verifiable, rigorous certificates} that the polynomial systems are indeed infeasible that can be validated independently of how they were generated. This is not an entirely new concept in geometric proofs: similar rigorous computer-generated proofs are given, for example, by Bachoc and Vallentin \cite{BachocVallentin2008} of their lower bounds on the ``kissing problem'' (the maximum number of non-intersecting unit spheres touching a fixed unit sphere in $n$ dimensions); another example is Cohn and Woo's lower bounds on energy-minimizing point configurations on spheres \cite{CohnWoo2012}.

\subsection{Stability of convex polyhedra}
\label{sec:stability}

Static equilibra can be understood in mechanical terms: a static balance point of a convex body is a point on its surface on which the body can rest if supported on a horizontal plane. For our analysis the following, purely geometric, definitions of static equilibria specific to convex polyhedra are useful. Although we focus on $3$-dimensional polyhedra in this paper, the approach is applicable in any dimension, thus we shall give our fundamental definitions for the general case.

\begin{definition}\label{def:equilibria}
Let $P \subseteq \R^d$ be a $d$-dimensional convex polyhedron, let $\bd P$ denote its boundary, and let $c$ be the center of mass of $P$.  We say that $q \in \bd P$ is an \emph{equilibrium point} of $P$ with respect to $c$ if the hyperplane $h$ through $q$ and perpendicular to the line segment $[c,q]$ supports $P$ at $q$. In this case $q$ is \emph{nondegenerate} if $h \cap P$ is the (unique) $k$-dimensional face ($k=0,1, \dots d-1)$ of $P$ that contains $q$ in its relative interior. A nondegenerate equilibrium point $q$ is called \emph{stable} or \emph{unstable}, if $\dim (h \cap P) = d-1$, or $0$, respectively, otherwise we call it a \emph{saddle-type} equilibrium. 
\end{definition}

Unstable equilibria of convex polyhedra are thus associated with vertices:

\begin{definition}\label{def:unstable}
A vertex $v$ of a convex polyhedron $P$ is a \emph{nondegenerate unstable vertex} of $P$ if the hyperplane that contains $v$ and is orthogonal to the line connecting the center of mass and $v$ (called the \emph{vertex orthoplane} of $P$ at $v$) intersects $P$ only at $v$.
\end{definition}

Intuitively, the polyhedron can stand on any of its unstable vertices on a horizontal plane with its center of mass (vertically) above the vertex; see also \mbox{Figure \ref{fig:unstable}}.

\section{From mono-instability to systems of polynomial inequalities}

If a vertex is \emph{not} unstable, this can always be attributed to its position relative to the center of mass and just a single adjacent vertex. The corresponding relation was introduced in \cite{Bozoki}:
\begin{definition}
Let $i$ and $j$ be two vertices of a polyhedron with coordinate vectors $\vr_i$ and $\vr_j$, and let $\vc$ be the coordinate vector of the center of mass of the polyhedron. We say that $j$ \emph{shadows} $i$ (or $i$ is \emph{in the shadow of} $j$) if
\begin{equation}\label{eq:shadow}
(\vr_i-\vr_j)^\T(\vr_i - \vc) \leq 0.
\end{equation}
\end{definition}

The next lemma reveals how the shadowing relationship between adjacent vertices can be used to identify unstable vertices. See also Figure \ref{fig:unstable}.

\begin{figure}
    \centering
    \includegraphics[width=0.6\linewidth]{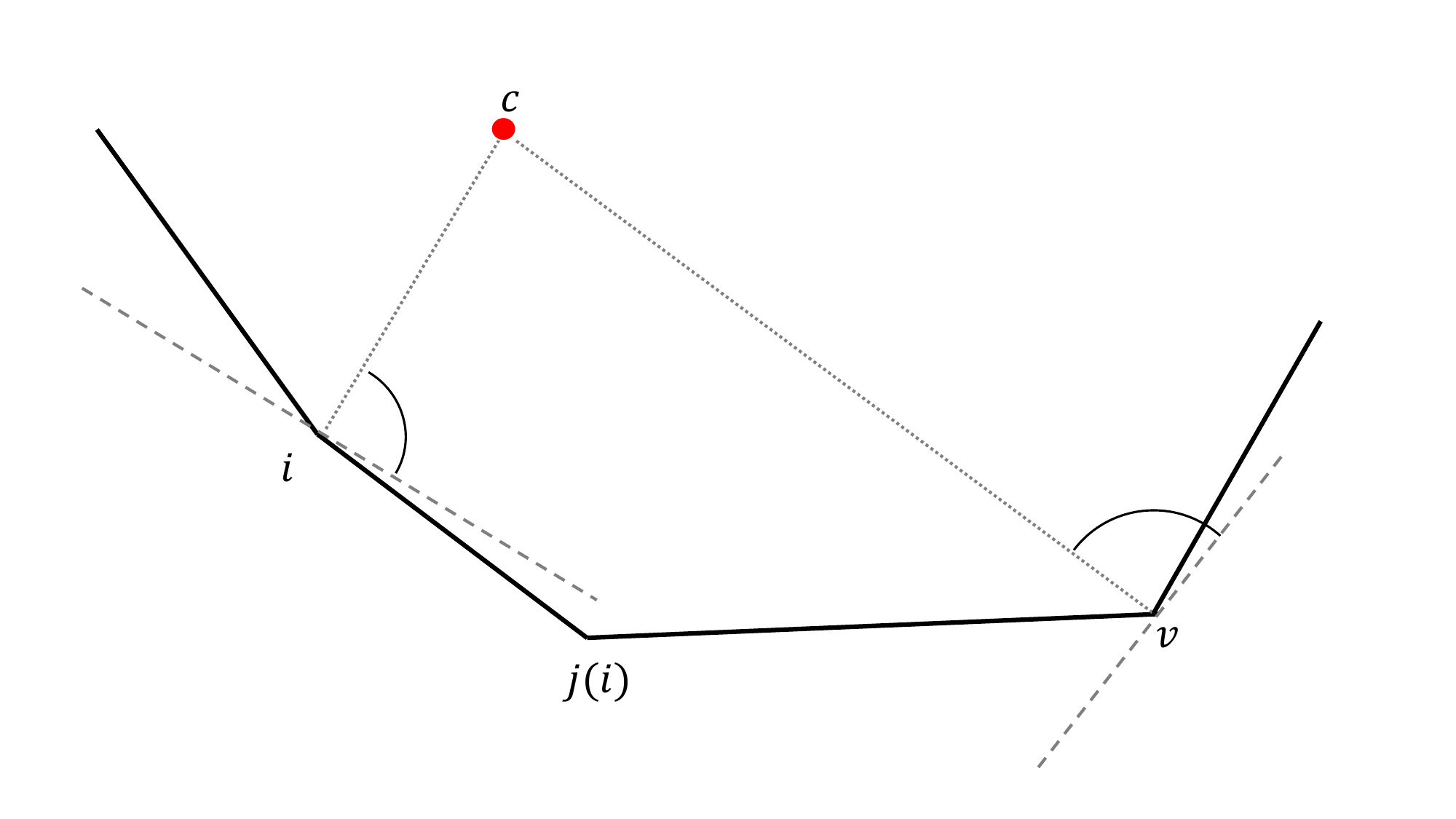}
    \caption{Illustration of Definition \ref{def:unstable} and Lemma \ref{lem:not-unstable-in-the-shadow}. The point $c$ is the center of mass. The vertex $i$ is in the shadow of the adjacent vertex $j(i)$, and is thus not unstable. Vertex $v$ is unstable.}
    \label{fig:unstable}
\end{figure}



\begin{lemma}\label{lem:not-unstable-in-the-shadow}
If the vertex of a polyhedron is not unstable, then it is in the shadow of an adjacent vertex.
\end{lemma}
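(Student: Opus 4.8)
The plan is to prove the contrapositive: if vertex $i$ is in the shadow of \emph{none} of its adjacent vertices, then $i$ is unstable. Write $\mathbf{n} = \vr_i - \vc$ for the direction defining the orthoplane. For an adjacent vertex $j$, the shadowing inequality \eqref{eq:shadow} reads $(\vr_i - \vr_j)^\T \mathbf{n} \le 0$, i.e.\ $(\vr_j - \vr_i)^\T \mathbf{n} \ge 0$; so the hypothesis of the contrapositive, that $i$ lies in the shadow of none of its neighbours, is exactly the statement that $(\vr_j - \vr_i)^\T \mathbf{n} < 0$ for every vertex $j$ adjacent to $i$. On the other side, the vertex orthoplane is $h = \{\vx : (\vx - \vr_i)^\T \mathbf{n} = 0\}$, and by Definition \ref{def:unstable} the vertex $i$ is unstable precisely when $h \cap P = \{\vr_i\}$. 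For this it suffices to show that $(\vx - \vr_i)^\T \mathbf{n} < 0$ for every $\vx \in P$ with $\vx \neq \vr_i$, since then no such point can lie on $h$, while $\vr_i \in h \cap P$ trivially.

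The key geometric ingredient is the local description of $P$ near the vertex $i$. I would use the standard fact that the tangent (feasible-direction) cone of a polytope at a vertex is generated by the edge vectors emanating from that vertex; since the edges at $i$ join it to precisely its adjacent vertices, this yields the inclusion
\[
P \subseteq \vr_i + \mathrm{cone}\{\vr_j - \vr_i : j \text{ adjacent to } i\},
\]
equivalently, every $\vx \in P$ admits a representation $\vx - \vr_i = \sum_{j \text{ adj.\ } i} \lambda_j (\vr_j - \vr_i)$ with all $\lambda_j \ge 0$. To justify the inclusion I would argue from the halfspace description of $P$: each facet-defining halfspace containing $i$ also contains all of $P$ and lies inside the translated halfspace through $\vr_i$ having the same normal; intersecting over the facets active at $i$ produces the tangent cone at $i$, whose extreme rays are exactly the edge directions at $i$, i.e.\ the vectors $\vr_j - \vr_i$ for adjacent $j$.

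With this representation in hand, the conclusion is a one-line sign computation. Taking the inner product with $\mathbf{n}$ gives
\[
(\vx - \vr_i)^\T \mathbf{n} = \sum_{j} \lambda_j \, (\vr_j - \vr_i)^\T \mathbf{n},
\]
and each summand is the product of a nonnegative $\lambda_j$ with a strictly negative factor $(\vr_j - \vr_i)^\T \mathbf{n}$. Hence the sum is $\le 0$, with equality only when all $\lambda_j = 0$, i.e.\ only when $\vx = \vr_i$. Thus $(\vx - \vr_i)^\T \mathbf{n} < 0$ for all $\vx \in P \setminus \{\vr_i\}$, so $h$ meets $P$ only at $\vr_i$ and $i$ is unstable, completing the contrapositive. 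I expect the only real obstacle to be the careful justification of the tangent-cone inclusion $P \subseteq \vr_i + \mathrm{cone}\{\vr_j - \vr_i\}$---specifically, the assertion that the directions to the \emph{adjacent} vertices already generate the entire feasible-direction cone at $i$, so that no distant vertex of $P$ can point in a direction outside their conic hull; once that is established, everything else is elementary.
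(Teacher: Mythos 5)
Your proof is correct and is in substance the same as the paper's: both arguments hinge on the tangent-cone inclusion $P \subseteq \vr_i + \mathrm{cone}\{\vr_j - \vr_i : j \text{ adjacent to } i\}$ followed by the same sign computation, with yours phrased as the contrapositive while the paper runs it directly (a point of $P$ in the far half-space forces some edge halfline, hence an adjacent vertex, into that half-space). If anything, you justify the cone inclusion more carefully than the paper, which invokes it with only the words ``by extension.''
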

\begin{proof}
Let $i$ be a vertex that is not unstable, let $j_1,\dots,j_k$ denote its adjacent vertices, and denote by $H$ the closed half-space whose boundary is the vertex-orthoplane at $i$ and which does not contain the center of mass of the polyhedron. We need to show that $H$ contains a vertex of the polyhedron adjacent to $i$.

By Definition \ref{def:unstable}, the polyhedron has a point different from $i$ contained in $H$, and by extension, so does the (unbounded) polyhedron \[i + \operatorname{cone}(\operatorname{span}(\{j_1-i, \dots, j_k-i\})).\]
Therefore, at least one of the halflines extended from $i$ towards an adjacent vertex is contained in $H$. This halfline contains the vertex $j$ adjacent to $i$ in $H$.
\end{proof}



Using the next Lemma, we can further refine the shadowing relation on the vertices of a polyhedron.

\begin{lemma}
\label{lem:monotone}
If vertex $i$ is in the shadow of vertex $j$, then $\|\vr_i - \vc\| < \|\vr_j - \vc\|$.
\end{lemma}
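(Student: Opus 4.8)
The plan is to recast the shadowing inequality in terms of the two displacement vectors $\vx := \vr_i - \vc$ and $\vy := \vr_j - \vc$, and then to apply the Cauchy--Schwarz inequality. Since $\vr_i - \vr_j = \vx - \vy$ while $\vr_i - \vc = \vx$, the hypothesis \eqref{eq:shadow} reads $(\vx - \vy)^\T \vx \le 0$, which rearranges to $\|\vx\|^2 \le \vy^\T \vx$. Combining this with the Cauchy--Schwarz bound $\vy^\T \vx \le \|\vx\|\,\|\vy\|$ immediately gives $\|\vx\|^2 \le \|\vx\|\,\|\vy\|$.

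Dividing through by $\|\vx\|$ yields $\|\vx\| \le \|\vy\|$, that is $\|\vr_i - \vc\| \le \|\vr_j - \vc\|$, so all that remains is to upgrade this to a \emph{strict} inequality. The division itself already requires $\|\vx\| > 0$, which I would justify by recalling that the center of mass $\vc$ lies in the interior of the polyhedron whereas the vertex $\vr_i$ lies on its boundary; hence $\vr_i \neq \vc$ and $\|\vx\| > 0$. (This also shows $\vy^\T \vx \ge \|\vx\|^2 > 0$, a fact I use below.)

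To rule out the equality case $\|\vx\| = \|\vy\|$, I would trace back the chain of inequalities: equality throughout forces both $\|\vx\|^2 = \vy^\T \vx$ and $\vy^\T \vx = \|\vx\|\,\|\vy\|$. The latter is the equality case of Cauchy--Schwarz with a \emph{positive} inner product, so $\vy = \lambda \vx$ for some $\lambda > 0$; then $\|\vy\| = \lambda \|\vx\|$, and $\|\vx\| = \|\vy\|$ together with $\|\vx\| > 0$ forces $\lambda = 1$, so $\vy = \vx$, i.e. $\vr_i = \vr_j$. This contradicts the assumption that $i$ and $j$ are distinct vertices, so the inequality is strict. There is no genuine obstacle here once the statement is rephrased through $\vx$ and $\vy$; the only care needed is the bookkeeping of the two equality cases, for which the facts $\vr_i \neq \vc$ (center of mass interior) and $\vr_i \neq \vr_j$ (distinct vertices) are exactly what is required.
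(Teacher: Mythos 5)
Your proof is correct, but it takes a different route from the paper's. The paper translates so that $\vc=\vzero$ (the same normalization you achieve by passing to the displacement vectors $\vx$ and $\vy$) and then finishes in one line \emph{without} Cauchy--Schwarz: expanding $\|\vr_i-\vr_j\|^2$ and applying the shadow inequality $\|\vr_i\|^2\le\vr_j^\T\vr_i$ gives
\[
0<\|\vr_i-\vr_j\|^2\le\|\vr_j\|^2-\|\vr_i\|^2,
\]
where the strict inequality on the left is just the distinctness of the two vertices, so the strict conclusion drops out immediately. Your argument instead chains the shadow inequality with Cauchy--Schwarz, which forces you to (a) divide by $\|\vx\|$, justified by importing the geometric fact that the centroid of a full-dimensional convex body is interior and hence $\vr_i\neq\vc$, and (b) run an equality-case analysis of Cauchy--Schwarz to upgrade $\le$ to $<$. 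Both steps are sound (and the interiority fact is true, though nowhere needed or stated in the paper), but neither is necessary: the paper's identity-based computation gets strictness from vertex distinctness alone and needs no case analysis. Note also that even within your framework the appeal to interiority could be avoided --- if $\vx=\vzero$ then distinctness gives $\vy\neq\vzero$ and the conclusion is immediate --- which would make your proof self-contained at the cost of one more case.
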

\begin{proof}
Since both the assumption and the conclusion are invariant to a translation of the polyhedron, we may assume without loss of generality that the center of mass is at $\vc=\vzero$. Then \eqref{eq:shadow} simplifies to $\|\vr_i\|^2 \leq \vr_j^\T \vr_i$, from which we get the desired inequality
\[ 0 < \|\vr_i - \vr_j\|^2 \overset{\eqref{eq:shadow}}{\leq} \|\vr_j\|^2 - \|\vr_i\|^2. \qedhere\]
\end{proof}

An immediate consequence of Lemma \ref{lem:monotone} is that the shadowing relation has \emph{no cycles}. Together with Lemma \ref{lem:not-unstable-in-the-shadow}, this also implies the well-known fact that the unstable vertex of a mono-unstable polyhedron is the vertex that is the farthest from the center of mass.

Consider now the (directed) graph $SG_P$ whose vertices are the vertices $1,\dots,V$ of the polyhedron $P$ and whose arcs are
\[ \{(i,j)\,|\, i \textrm{ is shadowed by the adjacent vertex } j\}. \]
We may call this the \emph{shadowing graph} of the polyhedron. Based on the above, for every polyhedron $P$, the shadowing graph $SG_P$ has no directed cycles, and (ignoring the directions) it is a subgraph of the graph formed by the vertices and edges of the polyhedron (commonly known as \emph{the graph of $P$}). If, in addition, $P$ is mono-unstable, then every vertex of $SG_P$ except for the unstable vertex has an outgoing arc in this graph by Lemma \ref{lem:not-unstable-in-the-shadow}, and thus, $SG_P$ is a (not necessarily disjoint) union of directed paths connecting every vertex to the unstable one.

In summary, for mono-unstable $P$, the shadowing graph $SG_P$ is a subgraph of the graph of $P$, and it contains as a subgraph a \emph{rooted tree} \cite[p.~187]{Harary1969} with the unstable vertex as its root. Except for this root, every vertex is shadowed by its parent vertex in this rooted tree.  

The observations of this section are summarized in the following theorem.
\begin{theorem}\label{thm:shadow}
Let $P$ be a mono-unstable polyhedron whose center of mass is at the origin. Let its vertices be labeled $1, \dots, V$, with $v$ being its only unstable vertex, and let $\vr_i$ be the coordinate vector of vertex $i$ for each $i=1,\dots,V$. Then for every vertex $i \in \{1,\dots,V\}\setminus\{v\}$ there exists a (not necessarily unique) vertex $j(i)$ adjacent to $i$ satisfying
\begin{equation}\label{eq:shadow-ij}
(\vr_i-\vr_{j(i)})^\T\vr_i \leq 0.
\end{equation}
The (undirected) graph with vertices $\{1,\dots,V\}$ and edges $\{ \{i,j(i)\}\,|\, i\neq v \}$ is a rooted subtree of the graph of $P$.
\end{theorem}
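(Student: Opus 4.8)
The plan is to assemble the theorem directly from the two lemmas already in hand, treating the statement as a consolidation of the preceding discussion rather than as a new argument. Since the center of mass is placed at the origin, the shadowing inequality \eqref{eq:shadow} collapses to $(\vr_i-\vr_j)^\T\vr_i\le 0$, so I first observe that \eqref{eq:shadow-ij} is precisely the assertion that $i$ is shadowed by $j(i)$.

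First I would settle the existence of the parent $j(i)$ for each $i\neq v$. Because $P$ is mono-unstable with unique unstable vertex $v$, every vertex $i\in\{1,\dots,V\}\setminus\{v\}$ fails to be unstable, so Lemma \ref{lem:not-unstable-in-the-shadow} guarantees that each such $i$ lies in the shadow of some adjacent vertex, which I name $j(i)$ (picking one arbitrarily if several qualify). This yields \eqref{eq:shadow-ij} at once, and since $j(i)$ is adjacent to $i$, the edge $\{i,j(i)\}$ is an edge of $P$; consequently the entire edge set $\{\{i,j(i)\}\mid i\neq v\}$ is contained in the edge set of the graph of $P$.

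Next I would show that these edges form a rooted tree with root $v$. I would orient each edge from $i$ toward its parent $j(i)$, producing a directed graph in which every vertex other than $v$ has out-degree exactly $1$ while $v$ has out-degree $0$. The decisive input is Lemma \ref{lem:monotone}: along every arc $i\to j(i)$ the quantity $\|\vr_i\|$ strictly increases (recall $\vc=\vzero$). Hence no directed path can revisit a vertex, so the directed graph is acyclic; starting from any vertex and repeatedly following parent pointers produces a strictly $\|\cdot\|$-increasing, hence non-repeating, sequence, which must terminate in the finite vertex set, and the only vertex at which it can stop is the unique sink $v$. Thus every vertex has a directed path to $v$, the $V-1$ oriented edges are pairwise distinct, and the underlying undirected graph is connected with $V-1$ edges on $V$ vertices, i.e.\ a tree rooted at $v$.

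Once the two lemmas are available the argument is essentially bookkeeping, so I do not expect a serious obstacle; the only point demanding care is the tree verification, specifically ruling out that two distinct non-root vertices contribute the same edge. This is exactly where Lemma \ref{lem:monotone} does the work: a shared edge arising from both $j(i_1)=i_2$ and $j(i_2)=i_1$ would force $\|\vr_{i_1}\|<\|\vr_{i_2}\|$ and $\|\vr_{i_2}\|<\|\vr_{i_1}\|$ simultaneously, which is impossible. The strict monotonicity of the distance-to-center along shadowing arcs is therefore the linchpin that upgrades the local statement ``every non-unstable vertex has a parent'' into the global rooted-tree structure.
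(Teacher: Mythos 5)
Your proposal is correct and takes essentially the same route as the paper, which presents Theorem \ref{thm:shadow} not with a standalone proof but as a summary of the preceding discussion: existence of a shadowing neighbor for each non-unstable vertex via Lemma \ref{lem:not-unstable-in-the-shadow}, and acyclicity of the shadowing graph via Lemma \ref{lem:monotone}, yielding the rooted tree with the unstable vertex as root. Your extra bookkeeping (distinctness of the $V-1$ chosen edges and connectivity via parent-pointer paths terminating at $v$) just makes explicit what the paper leaves implicit.
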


Theorem \ref{thm:shadow} can be interpreted as follows. If we can prove that for a given number of vertices $V$ the system of inequalities \eqref{eq:shadow-ij} does not have a solution, no matter how we assign a shadowing vertex $j(i)$ to each vertex $i$, then no mono-unstable polyhedron with $V$ vertices exists. Thus, in principle, we can accomplish our goal of proving the non-existence of mono-unstable polyhedra with $V$ vertices by enumerating all possible polyhedral graphs and all rooted subtrees of all such graphs, and showing that the system \eqref{eq:shadow-ij} is inconsistent with the center of mass being at the origin. Since $\vr_i=\vzero, i=1,\dots,V$ is a (non-geometric) solution that meets both criteria, we need to add at least one more condition, for example, that the unstable vertex $v$ is at the point $(1,0,0)$, which we can assume without loss of generality.

This conceptual algorithm is summarized in Algorithm \ref{alg:basic}. The notation $\textrm{CoM}(\vr_1,\dots,\vr_V)$ in \eqref{eq:com-eq} stands for the center of mass of the polyhedron whose vertices are $\vr_1,\dots,\vr_V$.

\begin{algorithm2e}
    \caption{Sufficient conceptual method for proving the non-existence of mono-unstable polyhedra on $V$ vertices}\label{alg:basic}
    \DontPrintSemicolon
    
    \For{each polyhedral graph $G$ on $V$ vertices}{
        \For{each vertex $v$ of $G$}{
            Enumerate every cycle-free mapping $j:\{1,\dots,V\}\setminus\{v\} \to \{1,\dots,V\}$ for which $j(i)$ is adjacent to $i$.
            
            \For{each such map $j$}{
            Prove that the following system of is inconsistent
            \begin{subequations}\label{eq:system-alg1}
            \begin{align}
                \vr_v = (1,0,0),\\            
                (\vr_i-\vr_{j(i)})^\T\vr_i &\leq 0 & i\in\{1,\dots,V\}\setminus\{v\}, \label{eq:shadow-alg1}\\
                \textrm{CoM}(\vr_1,\dots,\vr_V) &= \vzero. \label{eq:com-eq}
            \end{align}
            \end{subequations}
            }
        }
    }
\end{algorithm2e}

The equation \eqref{eq:com-eq} can be reformulated as a system of polynomial equations of degree four (see below) in the variables $\vr_i$. Therefore, \eqref{eq:system-alg1} is a system of polynomial equations and inequalities. Our task is to certify the infeasibility of these systems.

\section{Infeasibility certificates for systems of polynomial inequalites}

The infeasibility of systems of inequalities can be decided and certified easily when the inequalities are all \emph{linear}: the celebrated Farkas' Lemma states that the linear system $\vA\vx\leq\vb$ with $\vA\in\R^{m \times n}$ is infeasible if and only if there exists a vector $\vy\in\R_+^m$ satisfying $\vA^\T\vy=\vzero$ and $\vb^\T\vy = -1$. In other words, it is infeasible if and only if there are nonnegative coefficients $y_i (i=1,\dots, m)$ such that the corresponding linear combination of the inequalities is the trivially infeasible inequality $0 \leq -1$ \cite{Farkas1902}. If found, such a vector $\vy$ serves as an easily verifiable \emph{certificate of infeasibility} for the system, particularly when the components of $\vy$ are ``small'' rational numbers. Such a rational vector $\vy$, with bit size polynomial in the bit size of $\vA$ and $\vb$, can easily be found using linear programming \cite{Chvatal1983}.

In \cite{Bozoki}, the authors used a similar idea for certifying the infeasibility of systems of \emph{quadratic} inequalities. Although exact analogs of Farkas' Lemma are not known in the quadratic case (and likely do not exist on computational complexity grounds, since recognizing infeasible systems of quadratic inequalities is NP-hard), an analogous sufficient condition can be formulated for the infeasibility of a system
\begin{equation}
\label{eq:generic-quad-system}
a_i + \vb_i^\T\vx + \vx^\T\vQ_i\vx \leq 0 \quad (i=1,\dots,m),
\end{equation}
as follows: suppose there exist nonnegative coefficients $y_1,\dots,y_m$ such that $\sum_{i=1}^m y_i \vQ_i$ is positive definite and that the function value at the unique minimizer of the function
\begin{equation}
\label{eq:f}
\vx \mapsto \sum_{i=1}^m y_i(a_i + \vb_i^\T\vx + \vx^\T\vQ_i\vx)
\end{equation}
is strictly positive. (Both of these conditions can be easily verified for a given $\vy\in\R^m$.) Then it is immediately clear that \eqref{eq:generic-quad-system} does not have a solution. Similarly to the linear case, the vector of coefficients $\vy$ is an easily verifiable certificate of infeasibility. In \cite{PappEJOR}, the authors also showed that such a certificate (when exists) can be computed efficiently using \emph{semidefinite programming} \cite{VandenbergheBoyd1996} and used this technique to show that the the smallest mono-unstable convex polyhedron with point masses has 11 vertices by certifying the infeasibility of $362\,880$ systems of homogeneous quadratic inequalities. The following theorem is an adaptation of \cite[Corollary 9]{PappEJOR} to systems of the form \eqref{eq:generic-quad-system}, which is what we will use in our final algorithm in Section \ref{sec:refining}.

\begin{theorem}\label{thm:SDP}
Let a system of quadratic inequalities \eqref{eq:generic-quad-system} be given, and let $\hat{\vQ}_i$ denote the associated coefficient matrices
\[\hat{\vQ}_i := \begin{pmatrix}a_i & \vb_i^\T/2 \\ \vb_i/2 & \vQ_i\end{pmatrix}.\]
Consider the following semidefinite optimization problem:
\begin{equation}\label{eq:SDP}
\begin{aligned}
&\maximize_{(z,\vy)\in\R \times \R^m}\quad    && z\\
&\st &&\sum_{i=1}^m \hat{\vQ}_i y_i  \succcurlyeq z \vI\\
&&& \|\vy\|_2 \leq 1\\
&&& y_i \geq z \qquad i=1,\dots,m.
\end{aligned}
\end{equation}
If the optimal value of \eqref{eq:SDP} is positive, then \eqref{eq:generic-quad-system} is infeasible. Moreover, every rational feasible solution $(z,\vy)$ of \eqref{eq:SDP} with $z>0$ yields a certificate of infeasibility $\vy$ for \eqref{eq:generic-quad-system}.
\end{theorem}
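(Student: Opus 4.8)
The plan is to reduce the theorem to the elementary sufficient condition for infeasibility recalled just before its statement: it suffices to produce nonnegative multipliers $y_1,\dots,y_m$ for which the aggregated function $f(\vx):=\sum_{i=1}^m y_i\bigl(a_i+\vb_i^\T\vx+\vx^\T\vQ_i\vx\bigr)$ has positive-definite quadratic part and a strictly positive minimum. The single unifying observation I would exploit is that $f$ is itself a quadratic form in the \emph{lifted} variable $\begin{pmatrix}1\\\vx\end{pmatrix}$: writing $M:=\sum_{i=1}^m y_i\hat{\vQ}_i$, a direct block computation gives
\begin{equation*}
f(\vx)=\begin{pmatrix}1 & \vx^\T\end{pmatrix}\,M\,\begin{pmatrix}1\\\vx\end{pmatrix}\qquad\text{for every }\vx.
\end{equation*}
This turns the problem of bounding $f$ from below into a statement about the matrix $M$, which is exactly what the semidefinite constraint of \eqref{eq:SDP} controls.

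First I would fix an arbitrary feasible point $(z,\vy)$ of \eqref{eq:SDP} with $z>0$ and check that it satisfies both requirements of the sufficient condition. The constraints $y_i\ge z$ together with $z>0$ immediately force $y_i>0$ for all $i$, so the multipliers are nonnegative. Next, evaluating the matrix inequality $M\succcurlyeq z\vI$ on the lifted vector gives
\begin{equation*}
f(\vx)=\begin{pmatrix}1 & \vx^\T\end{pmatrix}M\begin{pmatrix}1\\\vx\end{pmatrix}\ge z\bigl(1+\|\vx\|_2^2\bigr)\ge z>0
\end{equation*}
for all $\vx$, so the minimum of $f$ is strictly positive; restricting $M\succcurlyeq z\vI$ to its lower-right block yields $\sum_{i=1}^m y_i\vQ_i\succcurlyeq z\vI\succ0$, which is the positive-definiteness of the quadratic part. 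With both hypotheses in hand, I would close by the standard contradiction argument: if some $\vx^*$ solved \eqref{eq:generic-quad-system}, then summing the $i$-th inequality against the weight $y_i\ge0$ would give $f(\vx^*)\le0$, contradicting $f(\vx^*)>0$. Hence \eqref{eq:generic-quad-system} is infeasible.

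Since the optimal value of \eqref{eq:SDP} being positive guarantees the existence of a feasible pair with $z>0$, this establishes the first assertion. The ``moreover'' statement is the very same argument run on a \emph{rational} feasible pair $(z,\vy)$: with $\vb:=\sum_i y_i\vb_i$ and $\vQ:=\sum_i y_i\vQ_i$, the two verifications---that $\vQ$ is positive definite and that $\min_\vx f(\vx)=\bigl(\sum_i y_i a_i\bigr)-\tfrac14\vb^\T\vQ^{-1}\vb$ is positive---can be carried out exactly in rational arithmetic, so $\vy$ is an independently checkable certificate of infeasibility for \eqref{eq:generic-quad-system}.

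I do not expect a substantive obstacle: once the lifting identity is in place, everything follows by reading off the two diagonal blocks of the semidefinite constraint and invoking the sufficient condition. The only points requiring care are bookkeeping ones---confirming that the norm bound $\|\vy\|_2\le1$ is irrelevant to the \emph{correctness} of the certificate (it is present solely to keep \eqref{eq:SDP} bounded and well-posed, enabling the computational approach of \cite{PappEJOR}), and noting that a positive \emph{supremum} in \eqref{eq:SDP} already supplies a genuinely feasible pair with $z>0$ even when the optimum is not attained.
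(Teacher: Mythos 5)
Your proof is correct, and it differs from the paper's in the one step that carries the mathematical weight. Both arguments share the same skeleton: a feasible $(z,\vy)$ with $z>0$ gives strictly positive multipliers $y_i \ge z > 0$, the aggregated quadratic $f$ is shown to be strictly positive everywhere, and feasibility of \eqref{eq:generic-quad-system} would force $f(\vx^*)\le 0$, a contradiction. But where you establish positivity of $f$ via the lifting identity $f(\vx) = \begin{pmatrix}1 & \vx^\T\end{pmatrix}\bigl(\sum_i y_i \hat\vQ_i\bigr)\begin{pmatrix}1\\ \vx\end{pmatrix} \ge z(1+\|\vx\|_2^2)$, the paper instead verifies the sufficient condition literally as stated before the theorem: it checks that the lower-right block $\sum_i y_i\vQ_i$ is positive definite, computes the unique minimizer $\tilde\vx = -\tfrac12(\sum_i y_i\vQ_i)^{-1}(\sum_i y_i\vb_i)$ explicitly, identifies $f(\tilde\vx)$ as the Schur complement of the lower-right block of $\sum_i y_i\hat\vQ_i$, and invokes the Schur Complement Lemma to conclude $f(\tilde\vx)>0$. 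Your route is more elementary and quantitatively sharper: it needs no matrix inversion, no computation of the minimizer, and no Schur complement machinery, and it yields the explicit lower bound $f(\vx)\ge z$ uniformly in $\vx$ (note it only uses $\sum_i y_i\hat\vQ_i \succcurlyeq z\vI \succ 0$, not strict definiteness of the quadratic part as a separate hypothesis). What the paper's computation buys in exchange is an explicit formula for $\min_\vx f(\vx)$ as a Schur complement, which matches the verification recipe described in the surrounding text (check $\sum_i y_i\vQ_i \succ 0$, then check the minimum value is positive) for independently validating a certificate; with your approach the corresponding recipe is even simpler---just confirm $\sum_i y_i \hat\vQ_i \succ 0$ in rational arithmetic---so nothing is lost. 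You are also right, and slightly more careful than the paper, on the two bookkeeping points: the constraint $\|\vy\|_2\le 1$ matters only for boundedness of \eqref{eq:SDP}, and a positive supremum already yields a feasible pair with $z>0$ regardless of attainment (the paper instead argues that the optimum is attained).
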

\begin{proof}
The optimization problem \eqref{eq:SDP} has an optimal solution, because the objective function is continuous and the feasible region is non-empty (consider $(z,\vy) = (0,\vzero)$), bounded in $\vy$, and bounded from above in $z$.

Consider a feasible solution $(z,\vy)$ with $z>0$, and let $f$ denote the corresponding quadratic polynomial \eqref{eq:f}. From the feasibility of $(z,\vy)$ and $z>0$, we see that each $y_i > 0$ and that $\sum_{i=1}^m \hat\vQ_i y_i $ is positive definite, therefore so is its lower-right block $\sum_{i=1}^m \vQ_i y_i $, which is the Hessian of $\frac{1}{2}f$. That is, $f$ is a strictly convex quadratic polynomial, with a unique minimizer.

This unique minimizer is the point $\tilde\vx$ where $\nabla f(\tilde\vx) = \vzero$, given by
\[ \tilde\vx = -\frac{1}{2}\left(\sum_{i=1}^m \vQ_i y_i\right)^{-1}\left(\sum_{i=1}^m \vb_i y_i\right), \]
and thus the minimum value of $f$ is
\begin{align*}
f(\tilde\vx) &= \left(\sum_{i=1}^m a_i y_i\right) + \left(\sum_{i=1}^m \vb_i y_i\right)\tilde\vx + {\tilde\vx}^\T\left(\sum_{i=1}^m \vQ_i y_i\right)\tilde\vx\\
&= \left(\sum_{i=1}^m a_i y_i\right) - 
\left(\sum_{i=1}^m \frac{1}{2}\vb_i y_i\right)^\T \left(\sum_{i=1}^m \vQ_i y_i\right)^{-1} \left(\sum_{i=1}^m \frac{1}{2}\vb_i y_i\right)
\end{align*}
The quantity on the right-hand side of this equation is precisely the Schur complement of the lower-right block of the matrix $\sum_{i=1}^m \hat\vQ_i y_i$. Since both $\sum_{i=1}^m \hat\vQ_i y_i$ and $\sum_{i=1}^m \vQ_i y_i$ are positive definite, the Schur Complement Lemma \cite[Theorem 1.12]{Zhang2005} tells us that the Schur complement $f(\tilde\vx)$ is also positive. Therefore, the polynomial $f$ is strictly positive everywhere, which is impossible if $\vy\geq 0$ and \eqref{eq:generic-quad-system} is feasible.
\end{proof}

The important consequence of Theorem \ref{thm:SDP} and its proof is that we need not find the exact optimal solution of the problem \eqref{eq:SDP}; in fact, we need not even find an exact feasible solution. As long as our numerical solution of \eqref{eq:SDP} is ``close enough to feasibility'' that $\vy\geq 0$ and $\sum_{i=1}^m\hat\vQ_iy_i$ is positive definite, the floating-point vector $\vy$ is automatically a rational vector that serves as an easily verifiable certificate proving that \eqref{eq:generic-quad-system} is infeasible.

The idea does not easily generalize to polynomials of degree greater than two, such as the system \eqref{eq:system-alg1}, because for polynomials of degree 4 or more, even the complexity of recognizing convex polynomials is NP-hard. More complex sufficient conditions of global positivity applicable to polynomials of any degree involve \emph{sum-of-squares} certificates \cite{Blekherman2012,Laurent2009}, which can also be computed using semidefinite programming \cite{DurRendl2021,Lasserre2001}, but the computation of these certificates is far too computationally demanding for our task. Instead, in our final algorithm (Algorithm \ref{alg:complete}) we will replace the system \eqref{eq:system-alg1} with a quadratic one whose infeasibility implies the infeasibility of \eqref{eq:system-alg1}, and apply Theorem \ref{thm:SDP} above to the resulting systems. This is detailed in Section \ref{sec:tetrahedral-decompositions}.

\section{Refining the algorithm}
\label{sec:refining}

The conceptual algorithm, as shown in Algorithm \ref{alg:basic}, is impractical for multiple reasons. Most importantly, the center-of-mass function $\textrm{CoM}$, a dense $3V$-variate polynomial of degree $4$, is far too unwieldy for existing methods to compute infeasibility certificates even for a single instance of the system \eqref{eq:system-alg1}. Additionally, the triple for-loop creates a large number of cases to consider (that is, a large number of systems of inequalities whose infeasibility must be certified) even for modest values of $V$. In this section, we investigate how to reduce the number of cases in each for-loop and how to simplify the system \eqref{eq:system-alg1} to one that is easier to handle.

The complete algorithm is shown in Algorithm \ref{alg:complete}; in the rest of this section we explain the details. If Algorithm \ref{alg:complete} successfully certifies the infeasibility of each generated polynomial system, then these certificates serve as a computer-generated proof of the nonexistence of mono-unstable 3-dimensional polyhedra with $V$ vertices.

\subsection{Reducing the number of cases}
\phantom{0}\newline
\noindent\textit{Triangulated graphs.} The vertices of every convex polyhedron can be perturbed (by an arbitrarily small positive amount) in such a way that each vertex remains a vertex, and that no four vertices lie in the same plane. Such a perturbed polyhedron is simplicial (has only triangular faces), and if the original polyhedron was mono-unstable, then (for sufficiently small perturbations) so is the perturbed one. Hence, in the outermost loop of the algorithm, it is sufficient to consider only simplicial polyhedra, which correspond to maximal (3-connected) planar graphs. 

For example, in the case of $V=5$, there is only one maximal planar graph, the triangular bipyramid.

\smallskip

\noindent\textit{Eliminating symmetries.} It is clear that for some polyhedral graphs we need not consider each vertex to be a candidate for being the only unstable vertex. The automorphically equivalent vertices of a graph can be determined using graph automorphism algorithms such as those in \cite{McKayPiperno2014}, or manually for the smallest values of $V$. If $u$ is a vertex of a polyhedron that cannot be the only unstable vertex, and the graph of the polyhedron has an automorphism mapping $u$ to $v$, then $v$ also cannot be the only unstable vertex.

For example, the triangular bipyramid ($V=5$) has only two automorphically inequivalent sets of vertices: the vertices with the same degree are symmetric. See also Figure \ref{fig:maximal-simplicial-graphs}, the graph of the polyhedron has an automorphism mapping vertex $1$ to vertex $2$, and another two automorphisms mapping vertex $3$ to $4$ and $5$, respectively. Therefore, it is sufficient to show that vertices $1$ and $3$ cannot be the only unstable vertex of the polyhedron, and it follows that no triangular bipyramid can be mono-unstable.

Additional symmetries could be exploited (for example, certain rooted subtrees lead to isomorphic systems of infeasible inequalities), but because this simplification is only needed to reduce computational time, which for low values of $V$ was already minimal after eliminating vertex symmetries, we did not pursue more elaborate techniques.

\subsection{Tetrahedral decompositions}
\label{sec:tetrahedral-decompositions}

The infeasibility of the systems \eqref{eq:system-alg1} is challenging to prove primarily because of the last set of equations \eqref{eq:com-eq}. We can replace these equations with simpler quadratic inequalities that are necessary (but not sufficient) for the center of mass to be at zero using a suitably chosen 3-triangulation, as follows.

We continue to assume without loss of generality that the coordinate vector of the unstable vertex $v$ is
\begin{equation}\label{eq:rv_eq_0}
\vr_v = (1,0,0).
\end{equation}
Now, consider those tetrahedra whose vertices are $v$ and the three vertices of a face of $P$ that does not contain $v$. These tetrahedra determine a geometric partition of $P$: their union is $P$, but no two of them have a common interior point. (Figure \ref{fig:tetra-decomp}.) We shall refer to this collection of tetrahedra as the \emph{tetrahedral decomposition of $P$ corresponding to $v$}. Alternatively, since the decomposition depends solely on the graph $G$ of the polyhedron, we may refer to it as the decomposition of $G$ rather than $P$.

\begin{figure}
    \centering
    \includegraphics[width=0.8\linewidth]{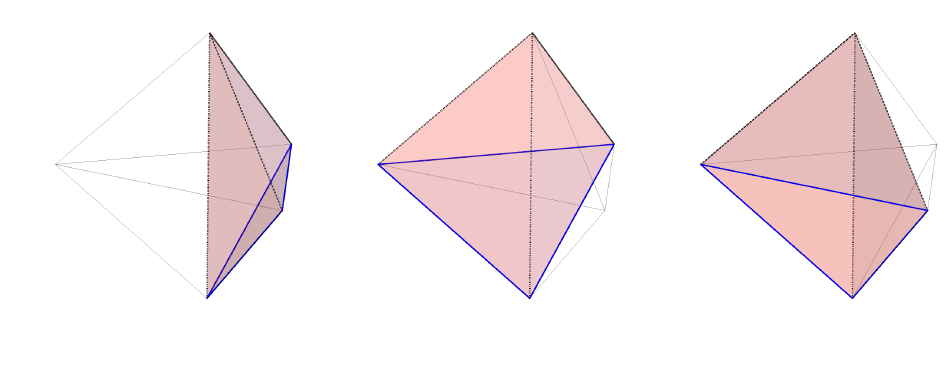}
    \caption{A tetrahedral decomposition of the triangular bipyramid. Shaded are the three tetrahedra that form the decomposition with respect to the top vertex. Each tetrahedron corresponds to a triangular face (thick lines) not containing this vertex.}
    \label{fig:tetra-decomp}
\end{figure}

Having the polyhedron thus decomposed, its center of mass, $(0,0,0)$ can be written as a weighted sum of the tetrahedra's center of mass, where the weights are the volumes of the tetrahedra. Here we use the equation of the first coordinates only:
\[
 0  =  \sum\limits_{\ell=1}^{N} \text{vol}(T_\ell) x_\ell,
\]
where $N$ denotes the number of tetrahedra, vol($T_\ell$) denotes the volume of the $\ell$-th tetrahedron, and $x_\ell$ denotes the first coordinate of the $\ell$-th tetrahedron's center of mass.
Obviously vol($T_\ell$) $> 0$ for all $\ell=1,2,\ldots,N$, and 
\[
x_\ell = \frac{1}{4}\sum_{k \in V(T_\ell)} r_{k,1}
\]
wherein $V(T_\ell)$ denotes the set of vertices of $T_\ell$.

It can be readily seen that there is at least one tetrahedron $T$ in the tetrahedral decomposition whose center of mass has a nonpositive first component, that is,
\begin{equation}
\label{eq:Sanyi}
\sum_{k\in V(T)} r_{k,1} \leq 0.
\end{equation}
Now, our approach is to replace the system of equations \eqref{eq:com-eq} with an instance of \eqref{eq:Sanyi} for a single tetrahedron $T$ in the decomposition, along with \eqref{eq:rv_eq_0}. This simpler necessary condition involves only linear equations and inequalities instead of the complicated expressions characterizing the exact location of the center of mass. If the resulting quadratic system has no solution for any of the tetrahedra, then the non-existence of mono-unstable polyhedra is proven.

The complete algorithm that incorporates each of the refinements discussed in this section is shown below as Algorithm \ref{alg:complete}. Two small clarifications are in order: first, the equation \eqref{eq:v100} is not explicitly included in the system; rather, the vector $\vr_v$ is eliminated from the remaining inequalities by this substitution; thus, \eqref{eq:system-alg2} is an inhomogeneous system of linear and quadratic inequalities. Second, the subscript $j(i)$ in \eqref{eq:shadow-alg2} refers to the shadowing relation $j$ discussed earlier, corresponding to selected the rooted subtree of $G$.
Inequalities \eqref{eq:shadow-alg2} are the same as \eqref{eq:shadow-alg1}; so are \eqref{eq:Sanyi-alg2}
and \eqref{eq:Sanyi}.

\begin{algorithm2e}
    \caption{The implemented sufficient method that proves the non-existence of homogeneous, mono-unstable polyhedra on $V \leq 6$ vertices}\label{alg:complete}
    \DontPrintSemicolon
    
    \For{every maximal planar graph $G$ on $V$ vertices}{
        Determine the automorphism classes of $G$ and choose a representative vertex from each class.
        
        \For{each representative vertex $v$}{
            Compute the tetrahedral decomposition of $G$ corresponding to $v$.\\
            Compute each rooted subtree of $G$ rooted at $v$.\\
            \For{each rooted subtree and tetrahedron $T$ in the decomposition}{
            Find an infeasibility certificate for the quadratic system
            \begin{subequations}\label{eq:system-alg2}
            \begin{align}
                &\vr_v = (1,0,0), \label{eq:v100}\\
                &(\vr_i-\vr_{j(i)})^\T\vr_i \leq 0 & i\in\{1,\dots,V\}\setminus\{v\}, \label{eq:shadow-alg2}\\
                &\sum_{k\in V(T)} r_{k,1} \leq 0.  \label{eq:Sanyi-alg2}
            \end{align}
            \end{subequations}
            by solving the associated semidefinite program \eqref{eq:SDP}.
            
            }
        }
    }
\end{algorithm2e}

\section{Computational results: there are no homogeneous mono-unstable convex polyhedra with fewer than 7 vertices}\label{sec:results}

We have implemented Algorithm \ref{alg:complete} in the computer algebra system \emph{Mathematica}, in which the combinatorial components of the algorithm are either readily available or are easily implemented, and which also incorporates the semidefinite programming solver CSDP, which can be used to compute the infeasibility certificates. The results for $V \in \{5,6\}$ are detailed below and summarized in Table \ref{tbl:results}; computation times were measured on a MacBook Pro laptop with an Apple M2 Pro CPU.

The list of the computed rational feasible solutions of \eqref{eq:SDP} certifying the infeasibility of the
systems generated by Algorithm \ref{alg:complete} can be found in the public repository \url{https://github.com/dpapp-github/mono-unstable/}.
This, along with the proof of Theorem \ref{thm:SDP}, serves as the independently verifiable computer-assisted proof of Theorem \ref{thm:main}.

\begin{figure}
    \centering
    \includegraphics[width=1\linewidth]{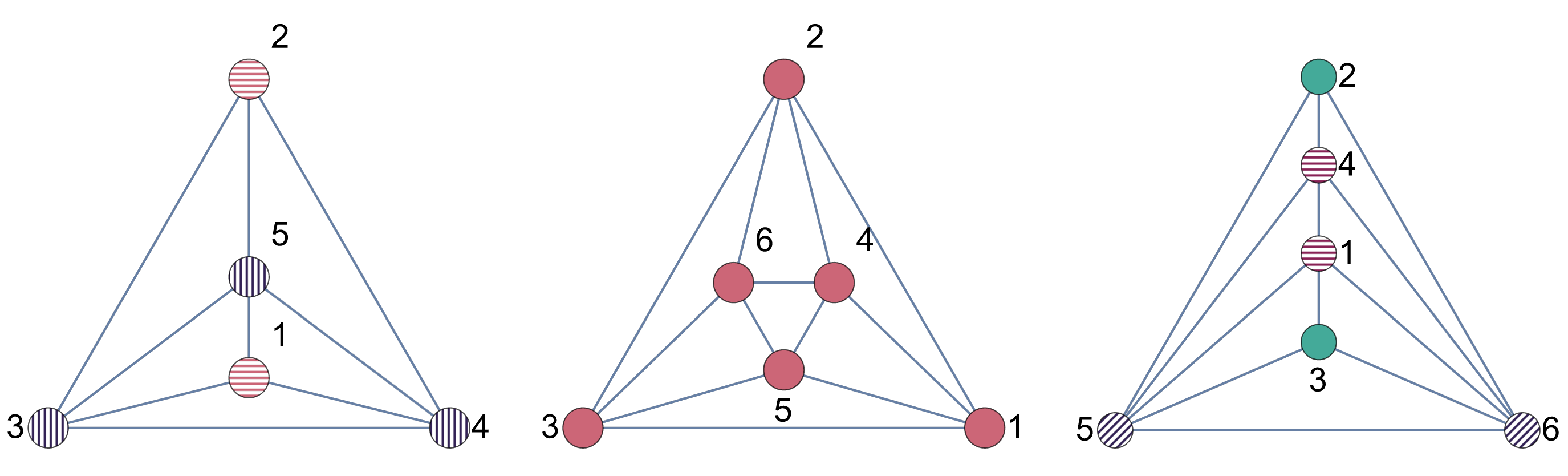}
    \caption{Maximal simplicial graphs with $5$ or $6$ vertices. Left to right: the triangular bipyramid, the regular octahedral graph, and an ``irregular'' octahedron. Colors/patterns show the symmetric vertices.}
    \label{fig:maximal-simplicial-graphs}
\end{figure}

\paragraph{$V=5$:} There is only one maximal planar graph $G$ on $5$ vertices, the triangular bipyramid. (Figure \ref{fig:maximal-simplicial-graphs}.) $G$ has 75 subtrees (and therefore, 75 rooted trees with the root at any fixed vertex). Up to symmetry, it has two types of vertices that are candidates to be the unstable vertex $v$, as vertices of the same degree are symmetric. If $v$ is chosen to be a degree-$3$ vertex, then the tetrahedral decomposition consists of $3$ tetrahedra; if $v$ has degree $4$, then the decomposition consists of only $2$. Therefore, the total number of systems of inequalities \eqref{eq:system-alg2} that we need to certify as infeasible is $75(3+2)=375$; each system has $5$ inequalities. Each were certified to be infeasible; the optimal value of \eqref{eq:SDP} was greater than $0.068$ in each instance. The total computation time was approximately 1 second.

\paragraph{$V=6$:} There are two maximal planar graphs $G$ on $6$ vertices:  the regular octahedral graph, and the one that to the best of our knowledge does not have a commonly used name, but we shall refer to as the ``irregular octahedron'', as it is the only triangulated polyhedron with eight faces other than the regular octahedron---it is the first graph in the catalog \cite{BrittonDunitz1973} of polyhedral graphs, and the last one on \mbox{Figure \ref{fig:maximal-simplicial-graphs}}.

The regular octachedron has $384$ subtrees; its vertices are all symmetric, and their corresponding tetrahedral decompositions consist of 4 tetrahedra. The irregular one has $336$ subtrees and $3$ classes of vertices; the corresponding tetrahedral decompositions have $3$, $4$, and $5$ tetrahedra respectively.  Therefore, the total number of systems of inequalities \eqref{eq:system-alg2} that we need to certify as infeasible is $384 \cdot 4 + 336(3+4+5) = 5568$; each system has $6$ inequalities. Each were certified to be infeasible; the optimal value of \eqref{eq:SDP} was greater than $0.077$ in each instance. The total computation time was approximately 19 seconds.

\begin{table}[tb]
\caption{Summary of the computations showing the number of cases at the various levels of case analysis detailed in Section \ref{sec:results}. Notation: $V$ is the number of vertices; $G$ is the graph of the polyhedron; $v$ is the label of the candidate unstable vertex on Figure~\ref{fig:maximal-simplicial-graphs}. The third column shows the number of subtrees of $G$; the fifth one the number of tetrahedra in the tetrahedral decomposition of $G$ corresponding to $v$. The total number of systems whose infeasibility needs to be certified is $75(3+2) + 384\cdot 4 + 336(4+5+3) = 5943$.\\}
\label{tbl:results}
\centering
\begin{tabular}{ccccc}
\toprule
$V$ & $G$ & \# trees & $v$ & \# tetrahedra\\
\midrule
\multirow{2}{*}{5} & \multirow{2}{12ex}{triangular bipyramid} & \multirow{2}{*}{75} & 1 & 3 \\
& & & 3 & 2\\
\midrule
\multirow{5}{*}{6} & \multirow{2}{12ex}{regular octahedron} & \multirow{2}{*}{384} & \multirow{2}{*}{1} & \multirow{2}{*}{4}\\
\\
                   \cmidrule(rl){2-5}
                   & \multirow{3}{12ex}{``irregular'' octahedron} & \multirow{3}{*}{336} & 1 & 4\\
                   &                        &                      & 2 & 5\\
                   &                        &                      & 5 & 3\\
\bottomrule
\end{tabular}
\end{table}

\section{Discussion}

Although all of the arguments presented are valid for $V>6$, they are not sufficient to prove the non-existence of homogeneous mono-unstable polyhedra with 7 vertices. Algorithm \ref{alg:complete} runs within minutes for $V=7$, but it only certifies the infeasibility of approximately 99\% of the generated systems of inequalities, and returns the optimal value $z=0$ for the remaining few cases. In principle, there are four possibilities:
\begin{enumerate}
    \item A homogeneous mono-unstable polyhedra with 7 vertices exists. We conjecture this to be highly unlikely.
    \item All of the systems \eqref{eq:system-alg2} are infeasible, but Theorem \ref{thm:SDP}, a sufficient but not necessary condition, is not strong enough to prove this.
    \item Some of the systems \eqref{eq:system-alg2} have solutions, but they are not geometric, that is, they do not correspond to vertices of a polyhedron.
    \item Some of the systems \eqref{eq:system-alg2} have solutions that correspond to vertices of a polyhedron, but they are nevertheless not a solution to the original problem. For example, the center of mass might fail to be at the origin.
\end{enumerate}

The last option could be avoided by reverting back to the conceptual \mbox{Algorithm \ref{alg:basic}}, and the penultimate case could also be prevented in principle by further tightening the formulation \eqref{eq:system-alg1} to exclude non-polyhedral solutions. However, these changes require far more complex systems, involving a larger number of inequalities of higher degrees, with substantially more variables, making the certification of infeasibility considerably more difficult.

As for replacing Theorem \ref{thm:SDP} with something more powerful: conceptually, we could avoid resorting to sufficient conditions altogether, since the problem of recognizing infeasible systems of polynomial inequalities is algorithmically decidable \cite{Tarski1951,Renegar1992}. Moreover, the existence of independently verifiable certificates of infeasibility are provided by the various \emph{Positivstellensatz} theorems of algebraic geometry, such as Putinar's Positivstellensatz \cite{Putinar1993}, and these certificates can in principle be computed using semidefinite programming \cite{Lasserre2001}, even in rational arithmetic \cite{DavisPapp2024}. However, in our experience, the current computational tools of semidefinite programming and sums-of-squares optimization are not sufficient to resolve problems of this scale.
\bibliographystyle{amsplain}
\bibliography{homogen}
\end{document}